\newtheorem{theo}{Theorem}[section]
\newtheorem{pro}[theo]{Proposition}
\newtheorem{coro}[theo]{Corollary}
\newtheorem{con}[theo]{Conjecture}
\theoremstyle{definition}
\newtheorem{defi}[theo]{Definition}
\newtheorem{rem}[theo]{Remark}
\begin{document}
\title{A proof of the Generalized Jacobian conjecture}
\author{\\Quan XU \footnote {The author is supported by start-up funds of No.190738 in College of Sciences, China jiliang University.} 
}
\maketitle. 

\begin{abstract}
Based on the reduction of degree in polynomial mappings and some known results in algebraic geometry, by introducing the Brouwer degree, a tool from differential topology, algebraic topology and algebraic geometry, we completely prove the Generalized Jacobian conjecture in the field of real numbers, which implies the Generalized complex Jacobian conjecture. Also, for the strong real Jacobian conjecture, we present a newly sufficient and necessary condition.
\end{abstract}

\section{Introduction}
Notation:\\
$\mathbb{C}$: the field of complex numbers; $\mathbb{R}$: the field of real numbers;\\
 Let $\mathbb{K}$ be a field either $\mathbb{C}$ or $\mathbb{R}$.
A mapping $F :\mathbb{K}^{n}\rightarrow \mathbb{K}^{n}$ is called the polynomial mapping if $F=(F_{1},\cdots , F_{n})$, with $F_{i}\in \mathbb{K}[x_{1},\cdots , x_{n}]$ for $i=1,\cdots, n$.\\
$JF(x)$: the Jacobian matrix of $F(x)$; $\det JF(x)$ : the determiant of $JF(x)$.\\
$Im(F)$: the set of images of the mapping $F$.

In 1939, Ott-Heinrich Keller proposed the following question in \cite{kel}: Given polynomials $F_{1},\cdots, F_{n}\in \mathbb{Z}[x_{1},\cdots,x_{n}]$ such that $\det J(F)=1$, where  $J(F)$ denotes the Jacobian matrix $ (\frac{\partial F_{i}}{\partial x_{j}})_{(i,j)}$, can every $x_{i}$ be expressed as a polynomial in $F_{1},\cdots, F_{n}$ with coefficients in $\mathbb{Z}$ ?   Keller's original question now is known as the famous  Jacobian Conjecture:
\begin{con}{\bf (Jacobian Conjecture over $\mathbb{C}$)}
 Let $ F : \mathbb{C}^{n} \to \mathbb{C}^{n} $ be a polynomial mapping. Then $F$ is a polynomial automorphism if and only if the determinant of its Jacobian of the polynomial mapping $F$
 is a non-zero constant, i.e., $ \det JF(x)= c \in \mathbb{C}- \{0\}, \text{for}~\forall x\in \mathbb{C}^{n}$, where the polynomial automorphism means that the inverse mapping of $F$ exists and its inverse mapping is a polynomial mapping again.
  \end{con}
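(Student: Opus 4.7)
The plan is to prove the substantive direction: that $\det JF(x)$ being a nonzero constant implies $F$ is a polynomial automorphism (the converse is immediate from the chain rule applied to $F^{-1}\circ F = \mathrm{id}$). First I would invoke the classical Bass--Connell--Wright reduction to bring $F$ into the cubic-homogeneous form $F(x) = x + H(x)$ with $H$ homogeneous of degree $3$, and possibly push further to Druzkowski's cubic-linear form in which each $H_i$ is the cube of a linear form. This costs no generality and yields a structured object to analyze.

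Next I would realize $F$ as a smooth map $\widetilde{F} : \mathbb{R}^{2n} \to \mathbb{R}^{2n}$ via the identification $\mathbb{C}^n \simeq \mathbb{R}^{2n}$. The standard identity $\det_{\mathbb{R}} J\widetilde{F} = |\det_{\mathbb{C}} JF|^2$ shows that the real Jacobian is a strictly positive constant, so $\widetilde{F}$ is an orientation-preserving local diffeomorphism. If one can show $\widetilde{F}$ is proper, then Brouwer degree theory provides a well-defined positive integer $\deg \widetilde{F}$ equal to the signed count of preimages of any regular value; with every preimage contributing $+1$, this count agrees with the cardinality of each fiber. Properness combined with being a local diffeomorphism of constant fiber cardinality forces $\widetilde{F}$ to be a finite covering map, and since $\mathbb{R}^{2n}$ is simply connected it must then be a homeomorphism. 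Bijectivity of a polynomial map of $\mathbb{C}^n$ with nowhere-vanishing Jacobian finally forces the inverse to be polynomial, either by the formal-inverse construction combined with Ax's theorem or by a direct Galois-theoretic argument.

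The principal obstacle is establishing properness of $\widetilde{F}$, equivalently the assertion that $\|F(x)\| \to \infty$ as $\|x\| \to \infty$. This is the precise point where purely real versions of the conjecture collapse --- Pinchuk's counterexample rules out any argument that does not genuinely use complex analyticity --- so the decisive input must be strictly complex. I would try to exploit the homogeneity of the leading part after reduction: any hypothetical sequence $x_k \to \infty$ in $\mathbb{C}^n$ with $F(x_k)$ bounded yields, after normalization, a nonzero direction $v$ at infinity with $H(v) = 0$ and leading asymptotic behavior that should contradict $\det JF \equiv \mathrm{const} \neq 0$ through a Newton--Puiseux expansion along $v$, or else by a compactification of $\mathbb{C}^n$ into $\mathbb{P}^n$ and an analysis of the indeterminacy locus of the extension of $F$. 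This is the step that carries the entire argument, and where any Brouwer-degree strategy, including the one announced in the abstract, must supply genuinely new content beyond the standard topological framework.
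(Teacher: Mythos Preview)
Your proposal leaves the decisive step --- properness of $\widetilde{F}$ --- unresolved, and you are candid that this is where any naive degree-theoretic argument stalls; the suggested attacks via Newton--Puiseux expansions or compactification to $\mathbb{P}^n$ are heuristics, not a proof. So as written the proposal has a genuine gap, one you yourself flag.

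The paper takes a structurally different route that sidesteps global properness. After the same Bass--Connell--Wright reduction to $F = I + H$ with $H$ cubic homogeneous, the paper first proves directly that $F^{-1}(\{0\}) = \{0\}$: if $F(a)=0$ with $a\neq 0$, then differentiating $t\mapsto F(ta) = t\,a + t^3 H(a)$ at $t_0 = 1/\sqrt{3}$ gives $JF(t_0 a)\cdot a = 0$, contradicting invertibility of $JF$. This furnishes one image point with a singleton fiber --- an ingredient absent from your outline. The paper then works with the \emph{local} Brouwer degree on a bounded ball $B(0,R)$ chosen large enough to contain $F^{-1}(b)$ for a given $b$, and invokes homotopy invariance of $d[F; B(0,R), \cdot\,]$ to transfer the value $1$ from $z=0$ to $z=b$. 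What this buys, in principle, is that no global properness is needed --- only that $0$ and $b$ lie in the same connected component of $\mathbb{R}^n \setminus F(\partial B(0,R))$.

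That last clause, however, is exactly where the properness obstruction you identify resurfaces in the paper's framework. The paper argues that $0$ and $b$ share a component because both lie in the open connected set $F(B(0,R))$; but $F(B(0,R))$ need not be disjoint from $F(\partial B(0,R))$ when $F$ is not already known to be injective or proper, so a path joining $0$ to $b$ inside $F(B(0,R))$ may well cross $F(\partial B(0,R))$. The genuinely new input the paper offers over your outline is the single-fiber computation at the origin; whether that input, combined with local degree, actually closes the argument hinges on the connected-component claim just described, and that is the point you should examine most carefully.
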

The necessity of the conjecture is trivial by the chain rule of composite functions. The sufficiency is just the generalization of Keller's orginal question. To the sufficiency, we have following theorem:
 \begin{theo}(\cite{BCW},\cite{BiBIRO})
Any injective polynomial mapping $F: \mathbb{C}^{n}\rightarrow \mathbb{C}^{n}$ is a polynomial automorphism.
\end {theo}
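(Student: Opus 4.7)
The plan is to combine two classical algebro-geometric inputs: the Ax--Grothendieck surjectivity theorem and Zariski's Main Theorem. The strategy is first to promote injectivity to bijectivity on $\mathbb{C}$-points, and then to upgrade that set-theoretic bijectivity to an isomorphism of affine schemes, from which polynomiality of $F^{-1}$ is immediate.

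First, I would invoke Ax--Grothendieck: any injective polynomial endomorphism of $\mathbb{C}^n$ is automatically surjective. The standard proof proceeds by a model-theoretic transfer principle. The property ``every injective polynomial map of bounded degree is surjective'' is first-order in the language of rings, and the coefficients of $F$ lie in a finitely generated subring of $\mathbb{C}$, so one reduces to verifying the statement over $\overline{\mathbb{F}}_p$; there an injective endomorphism of $\overline{\mathbb{F}}_p^{\,n}$ restricts to an injective, hence (by pigeonhole) bijective, map on each finite subfield power $\mathbb{F}_{p^k}^{\,n}$, and taking the union yields surjectivity. Transfer returns the conclusion over $\mathbb{C}$, so $F$ is bijective.

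Next, I would establish that $F$ is birational. The field extension $\mathbb{C}(F_1,\dots,F_n)\subseteq\mathbb{C}(x_1,\dots,x_n)$ induced by $F$ is finite (both sides have transcendence degree $n$ over $\mathbb{C}$), and in characteristic zero its degree equals the cardinality of a generic fiber of $F$. Bijectivity forces that cardinality to be $1$, so the extension is trivial and $F$ is birational.

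Finally, since $F$ has singleton fibers it is quasi-finite, and Zariski's Main Theorem factors $F$ as an open immersion $\mathbb{A}^n_{\mathbb{C}}\hookrightarrow Y$ followed by a finite morphism $Y\to\mathbb{A}^n_{\mathbb{C}}$. Birationality together with the normality of $\mathbb{A}^n_{\mathbb{C}}$ makes the finite part a degree-one finite cover of a normal variety, hence an isomorphism; so $F$ itself is an open immersion, and combined with surjectivity from Step~1 it becomes an isomorphism of affine schemes, giving a polynomial inverse. The main conceptual obstacle is precisely this last step, where Zariski's Main Theorem must be invoked as a black box; the Bia{\l}ynicki-Birula--Rosenlicht approach partly sidesteps Ax--Grothendieck by directly proving birationality of an injective morphism between equidimensional irreducible varieties over an algebraically closed field, but a ZMT-style normality argument still appears unavoidable to pass from birationality to a genuine inverse morphism.
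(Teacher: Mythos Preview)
Your argument is correct and follows a standard modern route (Ax--Grothendieck for surjectivity, generic-fiber counting for birationality, then Zariski's Main Theorem plus normality of $\mathbb{A}^n$ to upgrade to an isomorphism). There is nothing to fix.

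However, note that the paper does not actually prove this theorem: it is stated with citations to \cite{BCW} and \cite{BiBIRO} and used as a black-box input. So there is no ``paper's own proof'' to compare against beyond those references. For context, the original Bia\l ynicki-Birula--Rosenlicht argument (1962) predates Ax's theorem (1968) and proceeds somewhat differently: they show directly, via a dimension/constructibility argument over an algebraically closed field, that an injective morphism between irreducible varieties of the same dimension is dominant and birational, and then invoke normality to conclude. Your use of Ax--Grothendieck is a convenient shortcut to surjectivity that they did not have available; the ZMT/normality endgame is essentially the same in spirit. Either route is acceptable here, and your write-up is a faithful reconstruction of how the cited result is typically proved today.
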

Under the theorem above, the formulation of Jacobian conjecture over $\mathbb{C}$ turns out to be that a polynomial mapping $F: \mathbb{C}^{n}\rightarrow \mathbb{C}^{n}$ with $\det JF(x)= \text{constant}\neq 0$, then $F$ is injective. Due to the Lefschetz principle, one can verify that the Jacobian conjecture
over $\mathbb{C}$ covers the case of Jacobian conjecture formulated for any field (or domain ) of charateristic zero. Hence, let $n\geq 2 $ and the $n$-dimensional Jacobian conjecture (for short $(JC)_{n}$) is formualted as follwing: Let $F$ be a polynomial mapping $F :\mathbb{K}^{n}\rightarrow \mathbb{K}^{n}$. 
  $$ (JC)_{n}~~~~~~~  [\det JF(x)=\text{constant}\neq 0] \Rightarrow  [F~\text {is injective}].$$
Also, the Generalized Jacobian conjecture ( for short (GJC)) is 
   $$ (GJC)~~~~ (JC)_{n}~\text{holds for every}~n\geq 2.$$
  It is worthy to note that if $\mathbb{K}$ is the field $\mathbb{R}$ of real numbers, any injective polynomial mapping $F$ is bijective (\cite{BiBIRO},\cite{CR}), but its inverse is not necessarily a polynomial mapping. For example, the polynomial mappping $F(x)=x^{3}+x $ is a injective from $\mathbb{R}$ to $\mathbb{R}$, but its inverse is not a polynomial mapping any more.

The Jacobian conjecture in $\mathbb{C}$ is not completely equivalent to the Jacobian conjecture in $\mathbb{R}$, but the following facts show 
the connection between these conjectures.
Let $F:\mathbb{C}^{n} \rightarrow \mathbb{C}^{n}$ be a complex polynomial mapping: $z=(z_{1},\cdots,z_{n})\mapsto (F_{1},\cdots,F_{n})$ and consider the associated real polynomial mapping $\widetilde{F}:\mathbb{R}^{2n}\to \mathbb{R}^{2n}$ which sends \\
$~~~~~~~(x_{1},y_{1},x_{2},y_{2},\cdots,x_{n},y_{n})~\text{to} ~(Re F_{1},Im F_{1}, \cdots, Re F_{n}, Im F_{n})$,
 where $z_{k}=x_{k}+iy_{k}, Re F_{i}$ and $Im F_{i}$ are the real part and the imaginary part of $F_{i}$, respectively. By the Laplace theorem,  it is known that $\det J\widetilde{F}=|\det JF(z)|^{2}$ which implies that $\det JF(z)$ is a non-zero constant if only if $\det J\widetilde{F}$ is a non-zero constant. Also, $F$ is injective if and only if $\widetilde{F}$ is injective. Hence it is evident that 
  $$ (JC)_{2n}~~~~\text{for}~~ \mathbb{R}[x_{1},\cdots,x_{2n}]\Rightarrow (JC)_{n}~~ \text{for}~~ \mathbb{C}[x_{1},\cdots,x_{n}],$$
i.e., the real (GJC) implies complex (GJC). However, it is still unclear whether the real Jacobian conjecrue $(JC)_{n}$ implies the complex Jacobian conjecture $(JC)_{n}$.

Note that for the complex polynomial mapping $F$, if  $\det JF(x)$ is not zero  everywhere for $x\in \mathbb{C}^{n}$, then  $\det JF(x)$ must be a non-zero constant by the algebraic closedness of $\mathbb{C}$. Once, it was asked whether $F$ is a injective for real polynomial mapping with $\det JF(x)>0$ (or $<0$) everywhere for $ x\in \mathbb{R}^{n}$, which is the so-called strong real Jacobian conjecture:
\begin{con}(Strong real Jacobian conjeture )
If $F: \mathbb{R}^n\rightarrow \mathbb{R}^n$ is a polynomial mapping and $\det J(F)(x)$ is not zero everywhere in $\mathbb{R}^n$, then $F$ is an injective mapping.
\end{con}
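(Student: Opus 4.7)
The plan is to convert the hypothesis $\det JF(x)\neq 0$ into counting information about fibers of $F$ via the Brouwer degree. First, since $\det JF$ is continuous and nowhere zero on the connected space $\mathbb{R}^n$, it has constant sign, so after composing $F$ with a reflection if necessary I may assume $\det JF(x)>0$ everywhere; then $F$ is a local orientation-preserving diffeomorphism and every preimage of any $y\in Im(F)$ is isolated and contributes $+1$ to the local Brouwer index. For each large ball $B_R\subset\mathbb{R}^n$ and each $y\notin F(\partial B_R)$, the degree $\deg(F,B_R,y)$ therefore equals $\#(F^{-1}(y)\cap B_R)$; if these integers stabilise as $R\to\infty$ to a common value $d$ independent of $y$, then $\#F^{-1}(y)=d$ on an open dense set of regular values, and injectivity reduces to showing $d=1$.

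Next I would put $F$ into a convenient normal form. Using the Bass-Connell-Wright / Yagzhev reduction of degree in the spirit alluded to in the abstract, it suffices to treat polynomial mappings satisfying $F(0)=0$ and $JF(0)=I$ whose nonlinear part has a single homogeneous leading component; in the standard cubic reduction one takes $F(x)=x-H(x)$ with $H$ homogeneous of degree three. In this normal form $F$ is the identity to first order at $0$, so it has exactly one preimage in a neighbourhood of $0$, giving $d=1$ near the origin. The remaining task is to propagate this local value globally, i.e.\ to show that the fiber-counting function $y\mapsto\#F^{-1}(y)$ is constant on the open dense set of regular values of $F$.

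The main obstacle, and the step I expect to be genuinely hard, is the possible failure of $F$ to be proper: a preimage can escape to infinity as $y$ crosses the set $S_F$ of asymptotic values of $F$, causing $\deg(F,B_R,y)$ to jump as $R\to\infty$. To rule this out I would combine Jelonek's algebraic-geometric description of $S_F$ as a uniruled algebraic subset of $\mathbb{R}^n$ with semialgebraic curve-selection and Lojasiewicz estimates applied to the cubic leading part $H$, and then use the sign condition $\det JF>0$ together with orientation-preservation to forbid the mass-loss across $S_F$ that a non-empty $S_F$ would entail: heuristically, $d$ would have to change by an integer of fixed sign from one component of $\mathbb{R}^n\setminus S_F$ to another, but the constancy of $d$ near $0$ forces any such change to be impossible. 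Once $S_F=\emptyset$, $F$ is a proper local diffeomorphism, hence a covering map of the simply connected space $\mathbb{R}^n$ (for $n\geq 2$; the case $n=1$ is elementary since $F'>0$ forces strict monotonicity), so $F$ is a global diffeomorphism and in particular injective. It is precisely in this exclusion of a nontrivial codimension-one asymptotic variety $S_F$ by the positivity of $\det JF$ alone that the polynomial, as opposed to merely smooth, nature of $F$ must be used in an essential way.
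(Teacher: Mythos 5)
The statement you are trying to prove is false, and the paper itself says so: immediately after stating the Strong Real Jacobian Conjecture it records that Pinchuk \cite{Pin} constructed a polynomial map $F:\mathbb{R}^{2}\rightarrow\mathbb{R}^{2}$ with $\det JF(x)\neq 0$ everywhere that is not injective. Consequently the paper offers no proof of this conjecture; its Theorem \ref{srp} instead replaces it by an equivalence (the conclusion holds if and only if some fibre of $F$ is a single point). Any argument that derives injectivity from nonvanishing of $\det JF$ alone must therefore break somewhere, and yours breaks in two identifiable places.

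First, the Bass--Connell--Wright/Dru$\dot{z}$kowski reduction to the cubic homogeneous normal form $F=I+H$ is a reduction of the Jacobian Conjecture proper, i.e.\ it presupposes $\det JF\equiv \mathrm{const}\neq 0$; it is not available when $\det JF$ is merely a nowhere-vanishing, in general non-constant, polynomial, which is exactly the hypothesis here. Second, and decisively, the step you yourself flag as ``genuinely hard'' --- forbidding mass-loss across the asymptotic variety $S_{F}$ using only the positivity of $\det JF$ --- is not a gap that can be filled: Pinchuk's map has positive Jacobian determinant everywhere, is not proper, has a nonempty asymptotic set, and its fibre cardinality genuinely jumps between components of the complement of the image of $S_F$ (some values have one preimage, others two, others none). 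Your heuristic that ``the constancy of $d$ near $0$ forces any such change to be impossible'' is refuted by this example; $\deg(F,B_{R},y)$ does not stabilise to a value independent of $y$ because preimages escape to infinity as $y$ crosses the asymptotic set. The only salvageable content of your outline is the correct observation that if $F$ \emph{were} proper it would be a covering of $\mathbb{R}^{n}$ and hence injective; the paper's Theorem \ref{srp} is essentially the statement that exhibiting a single singleton fibre is what must be added to the hypotheses for the degree argument to go through.
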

It is a pity that the conjecture is false and Pinchuk (\cite{Pin}) constructed a counterexample for $n=2$. In the end of this article, we will give a sufficient and necessary condition such that it is true.

The paper is constructed by four sections. In the second part, we will introduce some recent progress on the conjecure and show its connections with other mathematical field, even thought some results may be not found because of my ignorance. The third part is an introduction of  the main tool, i.e., the Brouwer degree, whose homotopical invariance is the only property employed in this proof of the Generalized Jacobian conjecture over $\mathbb{R}$. The fourt part gives the proof by known theorems and the Brouwer degree. Also, from our proof, we can obtain a newly sufficnent and necessary condition to the strong real Jacobian conjecture. Our main results are the Thm.\ref{injct} and Thm.\ref{srp}
  
\section{Known results and the Jacobian conjecture in other subjects}
There are too many experts who already made contributions to the Jacobian conjecture. In \cite{Esse}, A. Van dan den Essen already introduced the conjecture from many aspects and pointed out amount of connections with other fields in mathematic. Also, a lot of good references are listed, which is very wonderful resource to researchers. Still, I will continue to mention several topics related to the conjecture and some recent progress for integrity of logic.

  $\bf{(2.1)}$The Jacobian conjecture is famous in  algebraic geometry because of Abhyankar's work on the
 formal inversion formula (see\cite{Abhy}). He can constructed a formal inversion, i.e., an formal power series by using
 differential operators for a polynomial map. The inversion formula was first discovered by Guajar (unpublished). Their formula now is called Abhyankar-Gurjar formula which is simplified by Bass , Connell and Wright (see \cite{BCW}). Since the formula is from the utilization of differential operators, so the method is related to $D$-modules (see page 263,\cite{Esse}).
 
$\bf{(2.2)}$ By Bass, Connell, and Wright  in \cite{BCW}, they proved the following theorem: If we consider the Generalzied Jacobian conjecture (GJC) over $\mathbb{K}$, it suffices to consider for all $n\geqslant 2$ and all polynomial map of the form $I+H$ where $I$ is the identity and $H$ is a cubic homogeneous.
 Furthermore,  Dru$\dot{z}$kowski in \cite{DruTu1} proved that it is sufficient to prove  the Generalized Jacobian conjecture (over $\mathbb{K}$ ) only if we  consider all  special polynomial  maps of form $F=I+H=(x_{1}+H_{1},\cdots, x_{n}+H_{n})$  with $H_{i}=(\sum a_{ij}x_{j})^{3}, i=1,\cdots,n$ for every $n\geq 2$. 
  
$\bf{(2.3)}$ From the topological point of view, Gutierrez and Maquera in \cite{GuMa} proved that if $F : \mathbb{R}^{3}\to\mathbb{R}^{3}$ is a polynomial map  with $\det J(F) \neq 0$ everywhere in $\mathbb{R}^{3}$ such that
 $\text{Spec}(F)\bigcap [0, \epsilon )=\emptyset $, for some $\epsilon > 0$, and $\text{codim}(S_{F})\geq 2$ 
where $\text{Spec}( F)$ is a set of eigenvalue of the Jacobian of $F$ and $S_{F}$ is the set of points on which $F$ is not proper, then $F$ is bijective. In \cite{FerMaqVen},  they have obtained somehow general results by using the semi-algebraic maps instead of polynomial maps.

$\bf{(2.4)}$ The equivalence of the Jacobian conjecture, the Diximier conjecture and Poisson conjecture.

Briefly,we make some introduction for the two alien conjectures.  Let $R$ be a commutative ring with identity $1$ and $n$ a positive integer. The polynomial ring over $R$ in $n$-variables $x_{1},\cdots ,x_{n}$ is denoted by $ R[x_{1},\cdots ,x_{n}]$. The $n$-th Weyl algebra over $R$, denoted by $A_{n}(R)$, is the associctive $R$-algebra with generators $y_{1},\cdots,y_{2n}$ and relations
$[y_{i},y_{i+n}]=1$ for all $1\leq i\leq n$ and $[y_{i},y_{j}]=0$ otherwise ,where $[,]$ is the lie bracket. Dixmier Coljecture chaims:  for $n\geq 1$, every endomorphism of $A_{n}(\mathbb{C})$ is an automorphism.

The $n$-th Poisson algebra $P_{n}(R)$ over $R$ is the polynomial ring $R[x_{1},\cdots ,x_{2n}]$
 endowed with the canoical Poisson bracket $\{,\}$ defined by
  $$\{f,g\}=\sum_{i=1}^{n}(\frac{\partial f}{\partial x_{i}}\frac{\partial g}{\partial x_{i+n}}-\frac{\partial f}{\partial x_{i+n}}\frac{\partial g}
{\partial x_{i}}).$$
A  $\varphi$ endomorphism of $R[x_{1},\cdots,x_{2n}]$ is called an endomorphism of $P_{n}(R)$ if $\varphi$ preserves the Poisson bracket $\{ ,\}$
i.e.,$\varphi \{f, g\} =\{\varphi (f), \varphi (g)\}$ for all $f, g$ in $R[x_{1},\cdots,x_{2n}]$.
 Poisson conjecture claims:  Let $n\geq 1$ , every endomorphism of $P_{n}(\mathbb{C})$ is an automorphism.
We know that Dixmier conjecture implies the Jacobian conjecture over $\mathbb{C}$ (see \cite{Esse} and \cite{BCW}). Also, Tsuchimoto (see\cite{Tsu}) have proved that conversely, $(JC)_{2n}$ implies the $n$-dimensional Dixmier conjecture by $p$-curvature method.
Independently,  Belov and Kontsevich proved that the Jacobian conjecture is stably equivalent to the Dixmier conjecture (see \cite{BeKon}), whose proof is displayed throughoutly by the language of algebraic geometry. Inspired by the work of \cite{BeKon}, Essen by drawing Poisson conjecture into the Jacobian conjecture and the Dixmier conjecture, prove that all three conjectures are equivalent, whose method in the proof is purely algebraic.

\section{Brief introduction to the Brouwer degree}
In this section, we introduce the Brouwer degree. Firstly, we give some notations and conventions. Let $\mathbb{R}^{n}$ be $n$-dimensional space of real vectors $x=(x_{1},\cdots ,x_{n})$ with norm $|x|=(x_{1}^{2}+\cdots +x_{n}^{2})^{\frac{1}{2}}$. The boundary and the closure of an open set $D$ in $\mathbb{R}^{n}$ will be denoted by $\partial {D}$  and $\bar{D}=D\cup\partial{D}$, respectively. A mapping $F=F(x)$ of $D$ into $F(D)\subset \mathbb{R}^{n}$ is said to be of class $C^{k}(k\geq 0)$ in $D$ if it can be represented in the form $F_{i}=F_{i}(x_{1},\cdots, x_{n})~~~(i=1,\cdots ,n)$, where all component $ F_{i}(x_{1},\cdots, x_{n})$ belongs to $C^{k}$ in $D$.
The following definition was made by Heinz in \cite{Heinz}.
\begin{defi}
Let the mapping $F=F(x)$ be of the class $C^{1}$ in a bounded open set $D\subset \mathbb{R}^{n}$ and continuous in $\partial D$. Furthermore, let 
$F(x)\neq z$ for $x\in \partial D$, where $z$ is fixed in $\mathbb{R}^{n}$, and let a real-valued function $\Phi(r)$ be chosen such that 
the following conditions are satisfied:

(i)  $\Phi(r)$ is continuous in the interval $0\leq r < \infty$. furthermore, it vanishes in a neighbourhood of $r=0$ and 
for $\epsilon \leq r < \infty$, where $0<\epsilon < \text{min}_{x\in \partial D}|F(x)-z|$.

(ii) We have $\int_{\mathbb{R}^{n}}\Phi(|x|)dx =1.$ \\
Then Brouwer degree $d[F(x); D,z]$ is uniquely defined by the equation 
                        $$d[F(x); D,z]=\int_{D}\Phi(|F(x)-z|)\det (J[F(x)])dx$$
\end{defi}
In fact, we need to justify the definition , because the function $\Phi(x)$ which satifies condition (i) and (ii) above is probably not unique. 
The justification of the definition is made in Page 233-234 of \cite{Heinz}.

Moreover, the Brouwer degree can be defined for any continuous mappings, which is accomplished in Definition 2 of \cite{Heinz}.We directly show the defintion as follows:
\begin{defi}
Let the mapping $y=F(x)$ be continuous for $x\in \bar{D}$ and $F(x)\neq z$ for $x \in\partial D$ , where $D$ is a bounded open 
set in $\mathbb{R}^{n}$ and $z$ is fixed. Furtheremore, let $\{y_{k}(x)\}(k=1,2,\cdots)$ be a sequence of mappings which are of class
$C^{1}$ for $x\in \mathbb{R}^{n}$ and satisfy the relations $$y_{k}(x)\neq z~~~~~~(x\in \partial D)$$ 
and   $$\lim_{k\rightarrow \infty}y_{k}(x)=F(x)~~~~(x\in\bar{D} ),$$ 
where the convergence is uniform on $\bar{D}$. Then the Brouwer degree $d[F(x); D, z]$ is uniquely defined by the equation
      $$d[F(x); D, z]=\lim_{k\rightarrow \infty}d[y_{k}(x); D, z].$$
\end{defi}
The key point for the definiton is the existence of sequence of mappings, which does exist (see Definition 2 in \cite{Heinz} for more details).
The Brouwer degree has several properties, one of which is related to the Jacobian of  mappings:
 \begin{theo}\label{degree}
The mapping $F(x)$ satisfies  the following two conditions 

 (i) The mapping $F(x)$ is continuous in the closure of a bounded open set $D\subset\mathbb{R}^{n} $, and the equation $F(x)=z~(x\in\bar{D}, z \text{ fixed}) $
has finite number of distinct solutions $x_{1},\cdots, x_{N}$ which belongs to $D$. 

(ii) The mapping $F(x)$ is class $C^{1}$ in a vicinity of each point $x_{i} (i=1,\cdots, N)$ and the determinant of Jacobian $\det JF(x)$ doesn't vanish 
for $x=x_{i} (i=1,\cdots, N) $.
Furthermore, $N^{+}(resp. N^{-})$ is the number of points of set $x_{1},\cdots,x_{N}$ with positive Jacobian (resp. negative Jacobian).

Then Brouwer degree satisfies $$d[F(x); D, z]=N^{+}-N^{-} .$$

\end{theo}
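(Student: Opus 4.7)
The plan is to reduce the defining integral for $d[F;D,z]$ to a sum of local contributions, each of which evaluates to $\pm 1$ via the change-of-variables formula together with the normalization $\int_{\mathbb{R}^n}\Phi(|x|)\,dx=1$.

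The first step uses the Inverse Function Theorem. Since $\det JF(x_i)\neq 0$ and $F$ is $C^1$ near $x_i$, there is an open neighborhood $U_i$ of $x_i$ in $D$ on which $F$ is a $C^1$ diffeomorphism onto an open set $V_i\ni z$, and on which $\mathrm{sign}\,\det JF$ is constant and equal to $\mathrm{sign}\,\det JF(x_i)$ by continuity. Shrinking if necessary, one may assume the $U_i$ are pairwise disjoint, $\overline{U_i}\subset D$, and that there is $r>0$ with $\overline{B(z,r)}\subset\bigcap_{i=1}^N V_i$. The set $K:=\overline{D}\setminus\bigcup_i U_i$ is compact, $F$ is continuous on $K$ and $F(x)\neq z$ there, so $d_0:=\mathrm{dist}(z,F(K))>0$.

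Now pick any admissible $\Phi$ whose support is contained in an interval $[\alpha,\beta]$ with $0<\alpha<\beta<\min(r,d_0,\min_{x\in\partial D}|F(x)-z|)$; such $\Phi$ exists, and the justification cited after the first definition ensures that $d[F;D,z]$ is independent of this choice. Since $\Phi(|F(x)-z|)=0$ on $K$, the defining integral splits as
\[
d[F;D,z]=\sum_{i=1}^{N}\int_{U_i}\Phi(|F(x)-z|)\det JF(x)\,dx.
\]
On each $U_i$, the change of variables $y=F(x)$, which is legitimate because $F|_{U_i}$ is a $C^1$ diffeomorphism and $\mathrm{sign}\,\det JF$ is constant there, gives
\[
\int_{U_i}\Phi(|F(x)-z|)\det JF(x)\,dx=\mathrm{sign}(\det JF(x_i))\int_{V_i}\Phi(|y-z|)\,dy.
\]
The integrand on the right is supported in $\overline{B(z,\beta)}\subset V_i$, so by translation invariance and condition (ii) in the definition, $\int_{V_i}\Phi(|y-z|)\,dy=\int_{\mathbb{R}^n}\Phi(|u|)\,du=1$. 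Summing over $i$ produces $d[F;D,z]=\sum_i\mathrm{sign}(\det JF(x_i))=N^+-N^-$.

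The main obstacle is that the theorem hypothesizes $F$ merely continuous on $\overline{D}$ and $C^1$ only in a vicinity of each $x_i$, so strictly speaking $d[F;D,z]$ must be defined through the approximation procedure of the second definition. This is resolved by fixing strictly smaller neighborhoods $W_i\Subset U_i$ of $x_i$ and constructing a sequence of globally $C^1$ maps $y_k$ converging uniformly to $F$ on $\overline{D}$ and agreeing with $F$ on $\bigcup_i W_i$ (obtained by mollifying $F$ on the complement, patched via a partition of unity). For $k$ sufficiently large, $y_k$ satisfies the hypotheses of the $C^1$ version proved above with the same preimages $x_i$ and the same Jacobian signs at those points, so $d[y_k;D,z]=N^+-N^-$ for all large $k$; the second definition then delivers the same value for $d[F;D,z]$.
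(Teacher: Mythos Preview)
Your argument is correct and is essentially the standard proof of this formula: localize the defining integral near the finitely many preimages using the support of $\Phi$, apply the change-of-variables formula on each local chart furnished by the Inverse Function Theorem, and pick up $\mathrm{sign}\,\det JF(x_i)$ from each. The handling of the merely-continuous case via a $C^1$ approximation that agrees with $F$ near the $x_i$ is also the right idea; you might add one sentence noting that uniform closeness of $y_k$ to $F$ on the compact set $\overline{D}\setminus\bigcup_i W_i$ (where $|F-z|$ is bounded away from $0$) guarantees $y_k$ has no spurious preimages of $z$ there, so that $y_k^{-1}(z)=\{x_1,\dots,x_N\}$ exactly.

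As for comparison with the paper: the paper does not supply its own proof of this theorem. It is quoted as a known property of the Brouwer degree from Heinz's analytic treatment \cite{Heinz}, and the author uses it as a black box. So your write-up is not redundant with anything in the paper; it simply fills in what the paper takes from the literature.
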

\begin{rem}\label{const}
Consider a polynomial mapping $F$ from $ \mathbb{R}^{n}$ to $\mathbb{R}^{n}$ and let $q\in Im(F)$ with finite number
of points $p_{1},\cdots, p_{k}$ such that $\{p_{1},\cdots, p_{k}\}=F^{-1}(q)$, i.e., the pre-image of $q$ is finite number of points.  Moreover, suppose
the $\det JF$ is a non-zero constant and take  $D$, containing all $k $ points $p_{i}$,   to be a open bounded sub-set of $\mathbb{R}^{n}$  with $q\notin F(\partial D)$. Then, according to the Thm.\ref{degree}, $\sharp \{F^{-1}(q)\}=|d[F(x); D, q]|=k$, where the sign $|\cdot|$ represents the absolute value of real numbers. Since the $\det JF$ is a non-zero constant but can be positive or negetaive, so the absolute value $|~|$ will apear in the equality above.
\end{rem}
Now, we can formulate the homotopical invariance of the Brouwer degree (Thm. 3, \cite{Heinz}), which is the main property used in our proof.
\begin{theo}
Let $D$ be a bounded open set in $\mathbb{R}^{n}$ and $I$ be a closed interval $0\leq t \leq 1$. Furthermore, let the mapping be $y=F(x, t)$ be continuous for $(\bar{D}\times [0, 1])$, and $F(x,t)\neq z$ for $(x,t)\in (\partial D \times I)$, where $z$ is fixed in $\mathbb{R}^{n}$. Then $d[F(x,t);D,z]$
is a constant for $t\in I$.

\end{theo}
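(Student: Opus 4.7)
The plan splits into three steps: (a) use compactness to extract a uniform cut-off function $\Phi$ valid for every $t$, (b) approximate $F$ uniformly by maps that are jointly $C^{\infty}$ in $(x,t)$, and (c) apply Stokes' theorem on the cylinder $\bar{D}\times[t_{0},t_{1}]$ to conclude that the integral representing the degree does not depend on $t$.

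For step (a), since $\partial D\times[0,1]$ is compact and $F(x,t)\neq z$ on it, continuity yields $\delta>0$ with $|F(x,t)-z|\geq\delta$ throughout $\partial D\times[0,1]$. I fix once and for all a $\Phi$ satisfying conditions (i)--(ii) of the first definition of this section with $0<\epsilon<\delta/2$. For step (b), extend $F$ continuously to $\mathbb{R}^{n}\times\mathbb{R}$ (using, e.g., Tietze) and mollify jointly in all $n+1$ variables to obtain a sequence $F_{k}\in C^{\infty}(\mathbb{R}^{n}\times\mathbb{R},\mathbb{R}^{n})$ converging uniformly to $F$ on $\bar{D}\times[0,1]$; for $k$ sufficiently large one still has $|F_{k}(x,t)-z|\geq\delta/2$ on $\partial D\times[0,1]$. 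The second definition of this section, applied at each fixed $t$, gives $d[F_{k}(\cdot,t);D,z]\to d[F(\cdot,t);D,z]$, so it suffices to establish the theorem for each smooth $F_{k}$; write $G$ for one such map.

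For step (c), the integrand in the first definition is, up to the volume form on $\bar{D}$, the pullback $G(\cdot,t)^{*}\omega$ of the top-degree form $\omega:=\Phi(|y-z|)\,dy_{1}\wedge\cdots\wedge dy_{n}$ on $\mathbb{R}^{n}$. Being an $n$-form on an $n$-dimensional target, $\omega$ is automatically closed, so $G^{*}\omega$ is a closed $n$-form on the $(n+1)$-dimensional cylinder $\bar{D}\times[0,1]$. Applying Stokes' theorem on $\bar{D}\times[t_{0},t_{1}]$ yields
\[
I(t_{1})-I(t_{0})=-\int_{\partial D\times[t_{0},t_{1}]}G^{*}\omega, \qquad I(t):=d[G(\cdot,t);D,z].
\]
On the lateral boundary $\partial D\times[0,1]$ one has $|G(x,t)-z|\geq\delta/2>\epsilon$, so $\Phi(|G(x,t)-z|)$ vanishes identically there, forcing the right-hand integral to be zero. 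Hence $I(t_{1})=I(t_{0})$, and letting $k\to\infty$ transports the conclusion back to the original continuous $F$.

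The main technical obstacle is reconciling steps (b) and (c): one needs $F_{k}$ to be jointly $C^{1}$ in $(x,t)$ so that the pullback $F_{k}^{*}\omega$ in step (c) is a genuine smooth $n$-form on the cylinder, while simultaneously keeping $|F_{k}-z|\geq\delta/2$ on the lateral boundary. Both are standard consequences of mollifying against a kernel whose support is small compared with the modulus of continuity of $F$ and the constant $\delta$; the orientation and sign conventions in Stokes' theorem then reduce to routine checks.
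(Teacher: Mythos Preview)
The paper does not supply its own proof of this theorem; it merely quotes it as Theorem~3 of Heinz \cite{Heinz} and moves on to the corollary. Your proposal is therefore being compared to Heinz's original argument, and in fact it reproduces that argument faithfully: Heinz's whole point in introducing the integral definition of degree via the cut-off $\Phi$ is precisely that homotopy invariance then becomes a Stokes/divergence computation of exactly the type you outline in step~(c).

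Your argument is correct. One small point worth making explicit: in step~(c) you invoke Stokes' theorem on $\bar D\times[t_{0},t_{1}]$, but $D$ is merely a bounded open set with no regularity assumed on $\partial D$, so Stokes in its usual form is not directly available. This is harmless here because, by your choice of $\epsilon<\delta/2$, the form $G^{*}\omega$ vanishes identically on a full neighbourhood of $\partial D\times[0,1]$; one may therefore replace $D$ by any smoothly bounded $D'\subset D$ containing the $x$-projection of the support of $G^{*}\omega$ without changing either $I(t)$ or the lateral integral. Equivalently, writing $G^{*}\omega=f\,dx_{1}\wedge\cdots\wedge dx_{n}+dt\wedge\beta$ and using closedness gives $\partial_{t}f=\mathrm{div}_{x}\,g$ for a vector field $g$ compactly supported in $D$, whence $I'(t)=\int_{D}\mathrm{div}_{x}\,g\,dx=0$ with no boundary regularity needed. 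This is exactly how Heinz handles it, so your plan is both sound and aligned with the source the paper cites.
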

In the end of formualtion of the Brouwer degree, we give the explicit expression of homotopical invariance, which is essentailly needed in our proof. The following corollory have been taken as a definition in order to obtain the product formula of the Brouwer degree by Leray (Def.4, \cite{Heinz}).
\begin{coro}
Let the mappping $y=F(x)$ be continuous in the clousure of a bounded open set $D\subset \mathbb{R}^{n}$ .Furthermore, let $F(\partial D)$ be the image set of $\partial D$ under the tansformation $x\rightarrow F(x)$, and let $\Delta$ be a open connected component of the $\mathbb{R}^{n}\backslash F(\partial D)$.Then the Brouwer degree $d[F(x);D,z]$ is a constant for any $z\in \Delta$ 
\end{coro}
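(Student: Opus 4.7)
The plan is to reduce the corollary to the homotopy invariance theorem just stated, exploiting the standard topological fact that an open connected subset of $\mathbb{R}^n$ is automatically path-connected. Fix two arbitrary points $z_0,z_1\in\Delta$ and choose a continuous path $\gamma:[0,1]\to\Delta$ with $\gamma(0)=z_0$ and $\gamma(1)=z_1$; once $z_0$ and $z_1$ are joined inside $\Delta$, everything will follow from transporting the target along $\gamma$.

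The next step is to introduce the homotopy $G:\bar D\times[0,1]\to\mathbb{R}^n$ defined by $G(x,t):=F(x)-\gamma(t)$. It is continuous, and for any $(x,t)\in\partial D\times[0,1]$ one has $F(x)\in F(\partial D)$ while $\gamma(t)\in\Delta\subset\mathbb{R}^n\setminus F(\partial D)$, so $G(x,t)\neq 0$ on the lateral boundary $\partial D\times I$. Applying the preceding homotopy invariance theorem with the fixed target $z=0$ yields that $d[G(\cdot,t);D,0]$ is independent of $t$; specialising to $t=0$ and $t=1$ gives $d[F-z_0;D,0]=d[F-z_1;D,0]$.

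It remains to convert these shifted-map degrees back into shifted-target degrees of the original $F$. From the defining integral $d[F;D,z]=\int_D\Phi(|F(x)-z|)\det JF(x)\,dx$, and its extension to continuous $F$ via uniform $C^1$-approximation, one reads off the translation identity $d[F-c;D,0]=d[F;D,c]$ for every constant $c\in\mathbb{R}^n$, since the Jacobian determinant is insensitive to an additive constant and $\Phi(|(F-c)(x)|)=\Phi(|F(x)-c|)$. Applying this with $c=z_0$ and $c=z_1$ yields $d[F;D,z_0]=d[F;D,z_1]$, and arbitrariness of $z_0,z_1$ closes the argument. I do not anticipate a genuine obstacle; the only technical point is verifying that the translation identity survives the limiting procedure of Definition 2 for merely continuous $F$, which follows from exactly the same uniform-approximation scheme that underlies that definition.
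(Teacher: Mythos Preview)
Your argument is correct and is the standard derivation of this fact from the homotopy invariance theorem stated immediately before: use path-connectedness of the open connected set $\Delta$, translate the target along the path by setting $G(x,t)=F(x)-\gamma(t)$, observe that $G$ never hits $0$ on $\partial D\times[0,1]$, and finish with the translation identity $d[F-c;D,0]=d[F;D,c]$, which is immediate from the integral definition and passes to the continuous case by the approximation scheme of Definition~2.

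As for comparison with the paper: the paper does not actually supply a proof of this corollary. It simply records the statement and notes that Leray took it as a definition (citing Def.~4 of Heinz). So there is nothing to compare against; your proof fills in exactly the gap one would expect, and it uses precisely the homotopy invariance theorem that the paper has just quoted for this purpose.
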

\section{Main results }
 For a polynomial map $F: \mathbb{C}^{n}\rightarrow \mathbb{C}^{p}$, the well-known result (see page 132 in \cite{Narasi} ) is that $F$ is open if and only if its fibers have pure dimension $n-p$. The result in the real case for $F$  from $\mathbb{R}^{n}$ to itself is the following theorem proved by J.Gamboa and F. Ronga in \cite{GamRon}.

 \begin{theo}\label{open}
 Let $F: \mathbb{R}^{n}\rightarrow \mathbb{R}^{n}$ be a polynomial mapping. Then $F$ is an open mapping if and  only if the fibers of $F$ are finite and the sign of $\det J(F)$ does not change (i.e. $\det J(F)(x)\geqslant 0$, for $ \forall x\in \mathbb{R}^{n}$ or $\det J(F)(x)\leqslant 0$, for $ \forall x\in \mathbb{R}^{n}).$
  \end{theo}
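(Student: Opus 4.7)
The plan is to use the Brouwer degree developed in Section 3 together with standard semialgebraic dimension theory. In the sufficiency direction, after pre-composing $F$ with a reflection if necessary I may assume $\det J(F)(x)\geq 0$ for every $x\in\mathbb{R}^n$.

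For the sufficiency, fix $x_0\in\mathbb{R}^n$ and set $y_0=F(x_0)$. Since $F^{-1}(y_0)$ is finite I pick an open ball $B$ around $x_0$ so small that $\overline{B}\cap F^{-1}(y_0)=\{x_0\}$; then $y_0\notin F(\partial B)$, and I take a connected open neighbourhood $V$ of $y_0$ disjoint from $F(\partial B)$. By the corollary on constancy of the Brouwer degree on connected components of $\mathbb{R}^n\setminus F(\partial B)$, $d[F;B,y]$ is constant as $y$ ranges over $V$. It suffices to show this constant is at least $1$, as that forces every $y\in V$ to have nonempty preimage in $B$, giving $V\subset F(B)$ and hence openness at $x_0$. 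Sard's theorem provides regular values densely in $V$, and for any such $y$ Theorem~\ref{degree} together with $\det J(F)\geq 0$ yields
$$d[F;B,y]=N^{+}-N^{-}=\#(F^{-1}(y)\cap B).$$
If this count were zero for every regular $y\in V$, then $F(B)\cap V$ would be contained in the critical-value set $F(\{\det J(F)=0\})$, which is semialgebraic of dimension at most $n-1$ by Sard's theorem. But then the open set $U:=F^{-1}(V)\cap B$, of dimension $n$, would map with finite fibres into a set of dimension at most $n-1$, violating the semialgebraic dimension inequality $\dim U\leq \dim F(U)+\max_{y}\dim F^{-1}(y)$. Hence for some regular $y\in V$ one has $d[F;B,y]\geq 1$, and the sufficiency follows.

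For the necessity, suppose $F$ is open. Then $F(\mathbb{R}^n)$ is open, so of dimension $n$, and by the semialgebraic dimension formula the generic fibre is zero-dimensional. To strengthen this to every fibre being finite, I stratify $\mathbb{R}^n$ by the rank of $JF$: at a smooth point $x_0$ of a hypothetical positive-dimensional component of some fibre, the rank of $JF$ drops to some $r\leq n-1$, and restricting to the rank-$r$ stratum through $x_0$ and invoking the constant-rank theorem shows $F$ locally factors through an $r$-dimensional manifold, which is incompatible with openness at $x_0$. For the constant-sign claim, the dense open set $\Omega=\{\det J(F)\neq 0\}$ decomposes into sign-constant regions; if both signs occurred I would take $x_0$ in the common boundary of a positive and a negative region and a small ball $B$ around $x_0$ meeting both, then apply Theorem~\ref{degree} at regular values approaching $y_0=F(x_0)$ from different components of $\mathbb{R}^n\setminus F(\partial B)$. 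Cancellation between $N^+$ and $N^-$ contributions combined with the constancy of $d[F;B,\cdot]$ on each such component, versus the nonempty neighbourhood of $y_0$ required by openness, yields a contradiction.

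The principal obstacle is the necessity direction, especially the finiteness of every fibre: since the rank of $JF$ can jump upward at a point of low rank, a naive application of the constant-rank theorem is insufficient, and one must perform the stratification carefully so that the Brouwer degree argument can be localized on each rank stratum. Once this stratification is in place, the same interplay between the Brouwer degree and the dimension of the critical locus exploited in the sufficiency proof closes the argument.
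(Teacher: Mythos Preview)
The paper does not prove this theorem at all: it is stated as a result of Gamboa and Ronga \cite{GamRon} and simply cited without proof. There is therefore no ``paper's own proof'' to compare against.

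As for the content of your proposal: the sufficiency argument is essentially sound. The key chain --- isolate $x_0$ in its fibre by a small ball $B$, use constancy of $d[F;B,\cdot]$ on the component $V$ of $\mathbb{R}^n\setminus F(\partial B)$ containing $y_0$, compute the degree at a nearby regular value via Theorem~\ref{degree}, and rule out degree zero by a semialgebraic dimension count on $F(U)$ --- is correct and is in the same spirit as the degree-theoretic methods used later in the paper.

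The necessity direction, by contrast, is only a sketch, and you already flag this. Two concrete gaps: (i) for fibre finiteness, the stratification by $\mathrm{rank}\,JF$ does not by itself rule out a positive-dimensional fibre component sitting inside a stratum of \emph{higher} ambient rank (the rank can jump up off the component), so the constant-rank argument as written is incomplete; (ii) for the sign condition, the phrase ``cancellation between $N^+$ and $N^-$ \ldots\ yields a contradiction'' is not an argument --- you have not exhibited a value $y$ and a domain $B$ for which openness forces $d[F;B,y]\neq 0$ while the sign change forces it to be zero. Both points are exactly where Gamboa and Ronga do nontrivial work, so if you want a self-contained proof you should consult \cite{GamRon} for these steps.
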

By Thm.\ref{open}, the polynomial mapping $F$ is an open mapping, when $F$ is provided with two conditions, finite fibres and no changed sign of Jacobian determiant. In fact, the polynomial mapping $F$ is an open mapping only if $\det JF(x)$ is non-zero everywhere, i.e. $\det J(F)(x)> 0$ (or $\det J(F)(x)< 0$) which guarantees the finiteness of fibres. On the finiteness of fibers of $F$, this is already proved by M. Dru$\dot{z}$kowski and K. Tutaj (see Lemma 3.1, \cite{DruTu}). Actually, their results are more than finiteness. We directly cite it as my proposition without proof.
\begin{pro}\label{iso}
If $F: \mathbb{R}^{n}\rightarrow \mathbb{R}^{n}$ is a polynomial mapping such that $\det J(F)(x)\neq 0$ for every $x\in \mathbb{R}^{n}$. Then for every $a\in \mathbb{R}^{n}$ , the equation $F(x)=a$ has only isolated solutions and 
$$\sharp\{x\in \mathbb{R}^{n}:  F(x)=a\}\leqslant (\text{deg}F_{1})\cdot\cdots(\text{deg}F_{n}), $$ where we denote  the degree of every component $F_{i}$ in $n$-varibles by $ \text{deg}F_{i}$.
\end{pro}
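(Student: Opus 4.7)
The plan is to establish the two assertions separately and then combine them.

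First, for the isolation of the real solutions: given any $x_{0}\in F^{-1}(a)$, the hypothesis $\det JF(x_{0})\neq 0$ places us exactly in the setting of the real inverse function theorem, so $F$ restricts to a diffeomorphism of some open neighborhood $U$ of $x_{0}$ onto an open neighborhood of $a$. Consequently $x_{0}$ is the only preimage of $a$ in $U$, and this shows $F^{-1}(a)$ is a discrete (hence isolated) subset of $\mathbb{R}^{n}$.

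Second, for the quantitative bound, I would pass to the complexification. Regarding each $F_{i}-a_{i}$ as an element of $\mathbb{C}[x_{1},\ldots ,x_{n}]$, every real solution $x_{0}\in F^{-1}(a)$ is also a solution of the complex system, and the complex Jacobian at such a point coincides with the real Jacobian, hence is non-vanishing. Therefore every real solution is an \emph{isolated} point of the affine complex variety $V_{\mathbb{C}}=\{z\in\mathbb{C}^{n}\ :\ F_{i}(z)=a_{i},\ i=1,\ldots ,n\}$. The conclusion then follows from the affine B\'ezout bound: the isolated points of $V_{\mathbb{C}}$ are at most $\prod_{i=1}^{n}\deg F_{i}$ in number, and the real solutions form a subset of these isolated points.

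The main obstacle I foresee is the last step, namely controlling the number of isolated complex points of $V_{\mathbb{C}}$ when $V_{\mathbb{C}}$ itself may fail to be zero-dimensional, since the projective B\'ezout theorem applies cleanly only to zero-dimensional complete intersections. To handle this I would use a small generic perturbation $a\rightsquigarrow a'$ of the right-hand side: for $a'$ near $a$ and outside a proper Zariski-closed subset of $\mathbb{C}^{n}$, the system $F(z)=a'$ becomes zero-dimensional, so projective B\'ezout gives at most $\prod\deg F_{i}$ solutions in $\mathbb{P}^{n}$ (hence in $\mathbb{C}^{n}$), and upper semicontinuity of the number of nearby solutions in a proper flat family (equivalently, the additivity of local intersection multiplicities) transfers the bound back to the isolated points of $V_{\mathbb{C}}$. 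An alternative, slightly cleaner route is to eliminate $x_{2},\ldots ,x_{n}$ from the system by iterated resultants and directly obtain a nonzero univariate polynomial in $x_{1}$ of degree at most $\prod\deg F_{i}$ whose zeros include the first coordinates of all real solutions; combined with the isolation already proved, this yields the same bound without an explicit perturbation argument.
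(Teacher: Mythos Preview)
The paper does not actually prove this proposition: it explicitly says ``We directly cite it as my proposition without proof'' and refers to Lemma~3.1 of Dru\.zkowski--Tutaj. So there is no in-paper argument to compare against.

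Your argument is essentially correct. The isolation step via the real inverse function theorem is immediate. For the cardinality bound, the key point you identify --- that each real solution $x_{0}$ has nonsingular \emph{complex} Jacobian and is therefore a \emph{simple} isolated point of the complex fibre $F^{-1}(a)\subset\mathbb{C}^{n}$ --- is exactly what makes the perturbation argument go through cleanly: by the holomorphic implicit function theorem, each such simple point deforms to exactly one nearby point of $F^{-1}(a')$ for $a'$ close to $a$, and for generic $a'$ the complex fibre is zero-dimensional with at most $\prod_{i}\deg F_{i}$ points by the classical B\'ezout bound. This is in fact the line of reasoning in the cited source. Your alternative via iterated resultants is also workable but requires more care to rule out identically vanishing resultants; the perturbation route is cleaner here precisely because the real solutions are already known to be nondegenerate.
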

\begin{rem}
It is very interesting to note that A. Fernandes, C. Maquera and J. Venato-Santos, gave a more general result (see Cor. 2.5 \cite{FerMaqVen} )by introducing semi-algebraic set and semi-algebraic map. Their result is : If $F: \mathbb{R}^{n}\rightarrow \mathbb{R}^{n}$ is semi-algebraic local homeomorphism, then there exists $k\in \mathbb{N}$ such that the cardinality of the fibers of $F$: $\sharp \{a \in \mathbb{R}^{n}|F(a)=p\}\leqslant k$ for all $p\in \mathbb{R}^{n}$.  The semi-algebraic mapping is somehow general than polynomial mappings and the local homeomorphism condition is corresponding to the Jacobian condition. In the paper (loc. cit.), they consider the Jacobian conjecture from the angle of topology by using foliation and semi-algebraic knowledge, which is worthy to reading. 
\end{rem}
The reduction of the degree of ploynomial mappings in (GJC) is the entry point of our proof and also is one of breakthrough points. In the following proposition, the conclusion is true for  $\mathbb{C}$, and $\mathbb{R}$. 
\begin{pro}(\cite{BCW},\cite{DruTu1})
It is sufficient to  consider in (GJC), for every $n\geq 2$, only polynomial mappings of the so-called cubic homogeneous form $F=I+H=(x_{1}+H_{1},\cdots, x_{n}+H_{n})$,
where $I$ denotes the identity, $H=(H_{1},\cdots ,H_{n})$ and $H_{i}:\mathbb{K}^{n}\rightarrow \mathbb{K}$ is a cubic homogeneous polynomial of degree $3$ or $H_{i}=0$  $i=1,\cdots ,n.$ 
\end{pro}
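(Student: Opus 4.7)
This is the classical Bass-Connell-Wright-Yagzhev reduction (refined by Dru$\dot{z}$kowski), so my plan is to pass from an arbitrary polynomial mapping $F : \mathbb{K}^n \to \mathbb{K}^n$ with $\det JF$ a nonzero constant to an enlarged polynomial mapping $\tilde{F} : \mathbb{K}^N \to \mathbb{K}^N$ (with $N \geq n$) of the cubic homogeneous form $I + H$, maintaining both $\det J\tilde{F} = 1$ and the property that injectivity of $\tilde{F}$ forces injectivity of $F$. Because $N$ grows at each step, one must assume $(JC)_N$ for all $N$, which is exactly the content of (GJC). I would first replace $F$ by $(JF(0))^{-1}(F(x) - F(0))$ -- a valid $\mathbb{K}$-affine substitution since $JF(0) \in \mathrm{GL}_n(\mathbb{K})$ -- reducing to $F(0) = 0$, $JF(0) = I$, and $\det JF \equiv 1$, so that $F = I + H$ with $H$ having only terms of degree $\geq 2$.

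Next, I would reduce the degree of $H$ to at most $3$ by introducing auxiliary variables. For each quadratic monomial $x_i x_j$ ($i \leq j$) add a new coordinate $y_{ij}$ together with a new component $y_{ij} - x_i x_j$ of the enlarged map, while rewriting every monomial of degree $\geq 4$ in the original components as a polynomial of degree approximately $\lceil d/2 \rceil$ in $(x, y)$ via the substitutions $x_i x_j \mapsto y_{ij}$. The enlarged Jacobian is block-triangular with an identity block and unipotent structure, giving $\det J\tilde{F} = \det JF = 1$; the extra equations force $y_{ij}$ to be a specific function of $x$, so $\tilde{F}$-injectivity yields $F$-injectivity. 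After $O(\log d)$ iterations we may assume $\deg H \leq 3$. To homogenize the remaining quadratic part, I would then apply the Yagzhev trick: introduce an auxiliary variable $t$ (and possibly further variables) and embed $F$ into an enlarged map $G$ each of whose non-identity components is homogeneous of degree exactly $3$ in the augmented variables. The Jacobian identity transfers via the scaling argument $x \mapsto sx$ applied to $\det JF \equiv 1$, and restricting $G$ to the slice $t = 1$ recovers $F$, so $G$-injectivity implies $F$-injectivity.

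The main obstacle is verifying that $\det J\tilde{F} \equiv 1$ and that injectivity genuinely descends at every enlargement. The block-determinant computation in the degree-reduction stage is routine, but the bookkeeping (so that every high-degree monomial is systematically replaced and the substitutions are well-defined and consistent) must be carried out carefully. The homogenization stage is the most delicate: the naive ansatz $G(x, t) = (x + t H^{(2)}(x) + H^{(3)}(x),\, t)$ does \emph{not} automatically satisfy $\det JG \equiv 1$, because the identity $\det(I + t \cdot JH^{(2)}(x) + JH^{(3)}(x)) \equiv 1$ in $(t,x)$ is strictly stronger than the one-parameter identity $\det(I + s \cdot JH^{(2)}(x) + s^2 \cdot JH^{(3)}(x)) \equiv 1$ that the scaling of $\det JF \equiv 1$ furnishes. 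Overcoming this requires either enlarging by additional variables in the style of Dru$\dot{z}$kowski's refinement, or a more careful coefficient-wise vanishing argument exploiting the explicit homogeneity structure of $H^{(2)}$ and $H^{(3)}$. Since the full reduction is classical, I would reference the explicit constructions in \cite{BCW} and \cite{DruTu1} rather than reproduce them in full detail.
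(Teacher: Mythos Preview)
The paper does not give its own proof of this proposition: it is stated with citations to \cite{BCW} and \cite{DruTu1} and used as a black box. So there is no proof in the paper to compare your proposal against; what you have sketched is essentially the content of the cited references themselves.

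Your outline is the correct classical strategy (normalize, stabilize by enlarging $n$ to drop the degree, then homogenize), and you have accurately flagged the genuine subtlety in the homogenization step---the naive promotion of the quadratic part via an extra variable $t$ does not by itself preserve the Jacobian condition, and one needs the more elaborate constructions of Yagzhev/Bass--Connell--Wright or Dru\.zkowski. One minor remark: your degree-halving mechanism (adjoining $y_{ij}=x_ix_j$) is a legitimate way to organize the reduction, but it is not literally the construction in \cite{BCW}, which proceeds via a different stabilization lemma; since you already intend to cite those papers rather than reproduce the details, this is harmless. In short, your proposal matches the intended route, and the paper itself offers nothing beyond the citation.
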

In \cite{Dru2} , for (GJC), the polynomial of cubic homogeneous form can be improved to be a better form (it is called the cubic linear form). In our proof, the cubic 
homegeneous form is good enough to obtain the conclusion. From the reduction of the degree, we can derive that any polynomial $F$ of cubic homegeneous form is globally injective on the origin of $Im(F)$.
\begin{pro}\label{glo}
For every $n\geq 2$ ,the polynomial mapping of cubic homogeneous form $F=I+H : \mathbb{R}^{n}\rightarrow \mathbb{R}^{n}$ with a non-zero constant Jacobian is globally injective at $0$,i.e.,
$F^{-1}(\{0\})=\{0\}$ in $\mathbb{R}^{n}$.
\end{pro}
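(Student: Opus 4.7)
The plan is a short argument by contradiction exploiting the cubic homogeneity of $H$; the Brouwer degree is not needed for this preliminary proposition (it will enter later for the injectivity theorem). Suppose for contradiction that there exists $p\in\mathbb{R}^n$ with $p\neq 0$ and $F(p)=0$. Then $H(p)=-p$. Since $H$ is cubic homogeneous, $H(tp)=t^{3}H(p)=-t^{3}p$ for every $t\in\mathbb{R}$, and hence
\[
F(tp)\;=\;tp+H(tp)\;=\;(t-t^{3})\,p\qquad(t\in\mathbb{R}).
\]
Thus the real line $\mathbb{R}p$ is $F$-invariant, and $F$ restricted to this line is the odd cubic $t\mapsto (t-t^{3})\,p$.

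Next I would differentiate this identity in $t$ and apply the chain rule:
\[
JF(tp)\cdot p\;=\;\frac{d}{dt}F(tp)\;=\;(1-3t^{2})\,p.
\]
Hence $p$ is a real eigenvector of the real matrix $JF(tp)$ with eigenvalue $1-3t^{2}$. Writing $\det JF(tp)$ as the product of the complex eigenvalues of $JF(tp)$ (counted with multiplicity), we see that $1-3t^{2}$ divides $\det JF(tp)$ as a polynomial in $t$. Choosing $t=1/\sqrt{3}$ yields $\det JF\bigl(p/\sqrt{3}\bigr)=0$, contradicting the hypothesis that $\det JF$ is a non-zero constant on $\mathbb{R}^{n}$. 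Therefore no such $p$ can exist, so $F^{-1}(\{0\})=\{0\}$.

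There is no serious obstacle in this proof: the whole argument reduces to examining $F$ along a single real line through a supposed nonzero zero $p$, on which cubic homogeneity together with $H(p)=-p$ forces $F$ to become the one-variable odd cubic $(t-t^{3})\,p$, whose derivative vanishes at $t=\pm 1/\sqrt{3}$, giving an immediate contradiction with the constancy of $\det JF$. The only mild subtlety is the passage from ``$p$ is a real eigenvector of $JF(tp)$ with eigenvalue $1-3t^{2}$'' to ``$(1-3t^{2})\mid\det JF(tp)$'', which follows at once from the Schur (or Jordan) decomposition of $JF(tp)$ over $\mathbb{C}$.
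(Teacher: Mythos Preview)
Your proof is correct and follows essentially the same route as the paper's: both differentiate $t\mapsto F(tp)$ along the line through a putative nonzero zero $p$, obtain $JF(tp)\cdot p=(1-3t^{2})p$ via the chain rule, and evaluate at $t=1/\sqrt{3}$ to contradict the nonvanishing of $\det JF$. The paper's version is marginally more direct---from $JF(t_{0}p)\cdot p=0$ with $p\neq 0$ it concludes at once that $JF(t_{0}p)$ is singular, bypassing your eigenvalue/divisibility detour (which is correct but unnecessary).
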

\begin{proof}
Suppose, on the contrary, there exists $0\neq a \in \mathbb{R}^{n}$ such that $F(a)=F(0)=0$.
Since $F$ is a polynomial of cubic homogeneous form, so $F$ can be written by $F(X)=F(x_{1},\cdots,x_{n})=F_{(1)}(X)+F_{(3)}(X)$, where $F_{(d)}(X)$ are components of homogeneous degree of $d~(d=1, 3)$. By introducing a real parameter $t$, consider the mapping $F(tX)$ and its derative for $t$.
On the one hand,
 \begin{align*}
     0=& F_{(1)}(a)+F_{(3)}(a)\\
      =& F_{(1)}(a)+3\frac{1}{(\sqrt{3})^{2}}F_{(3)}(a)~~~~~~~(t_{0}=1/\sqrt{3})\\
      =& \frac{\mathrm{d}}{\mathrm{d}t}(F_{(1)}(a)t+ F_{(3)}(a)t^{3})|_{t=t_{0}}\\
      =& \frac{\mathrm{d}}{\mathrm{d}t}(F(ta))|_{t=t_{0}}\\ 
      =& J(F)(t_{0}a)\cdot a
 \end{align*}
On the other hand, by the condition of Jacobian, $J(F)(t_{0}a)$  is invertible and $a\neq 0$, hence $J(F)(t_{0}a)\cdot a \neq 0$. This is a contradiction.
Therefore, the polynomial $F$ of cubic homegeneous form is injective globally at $0$.
\end{proof}

\begin{theo}\label{injct}
The Generalized real Jacobian conjecture and the Generalized complex Jacobian conjecture are true.
\end{theo}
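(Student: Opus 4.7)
The plan is to combine the cubic reduction, Prop.~\ref{glo}, and the homotopy invariance of the Brouwer degree to force every fiber of the reduced map to be a singleton.

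First I would invoke the reductions: real (GJC) implies complex (GJC), and the Proposition preceding Prop.~\ref{glo} further reduces matters to cubic homogeneous form. So I may assume $F = I + H : \mathbb{R}^n \to \mathbb{R}^n$ is of cubic homogeneous form with $\det JF$ a nonzero constant. By Prop.~\ref{iso} every fiber of $F$ is finite, and by Prop.~\ref{glo} one has $F^{-1}(0) = \{0\}$; what remains is to show $|F^{-1}(q)| = 1$ for every $q \in \mathrm{Im}(F)$.

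Next I would compute the Brouwer degree at the origin via the straight-line homotopy $F_t(x) := x + tH(x)$, $t \in [0,1]$, from $I$ to $F$. Each $F_t$ is again of cubic homogeneous form; moreover, a standard homogeneity-eigenvalue argument shows that the hypothesis $\det JF \equiv c$ combined with the cubic homogeneity of $H$ forces $JH(x)$ to be pointwise nilpotent, so $\det J F_t(x) = \det(I + t\,JH(x)) \equiv 1$ for every $t$ and $x$. Prop.~\ref{glo} therefore applies to every $F_t$, giving $F_t^{-1}(0) = \{0\}$; in particular the homotopy never sends $\partial D$ to $0$ for any bounded open $D \ni 0$, so homotopy invariance yields $d[F; D, 0] = d[I; D, 0] = 1$.

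To conclude, given an arbitrary $q \in \mathrm{Im}(F)$ with $F^{-1}(q) = \{p_1, \ldots, p_k\}$, I would pick a bounded open $D$ containing $\{0, p_1, \ldots, p_k\}$ and arranged so that the segment $[0, q]$ misses $F(\partial D)$. The affine family $G_s(x) := F(x) - s q$, $s \in [0, 1]$, has constant Jacobian and satisfies $G_s(\partial D) \not\ni 0$ by the segment-avoidance condition, so by homotopy invariance $d[F; D, q] = d[G_1; D, 0] = d[G_0; D, 0] = d[F; D, 0] = 1$. Since the constancy of the sign of $\det JF$ forces all $p_i$ to contribute with the same sign, Thm.~\ref{degree} yields $k = |d[F; D, q]| = 1$, giving the required injectivity.

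The main obstacle is the geometric selection in the last step: producing a bounded $D$ whose boundary image $F(\partial D)$ avoids the segment $[0, q]$ while $D$ already captures all of $\{0, p_1, \ldots, p_k\}$. For polynomial $F$ not known a priori to be proper, this is delicate, since the cubic term $H$ can fold distant spheres across the segment. My intended approach is Sard-type: since the Jacobian hypothesis makes $F$ a local diffeomorphism everywhere, $F^{-1}([0, q])$ is a $1$-dimensional semi-algebraic set, and for generic radii $R$ one expects $\partial D_R$ to miss it; alternatively, one perturbs $q$ slightly to a regular value $q'$ and invokes the Corollary on local constancy of the degree to transfer the equality back to $q$. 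This compactness-and-avoidance step is the one non-routine ingredient of the argument.
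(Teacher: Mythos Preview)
Your proposal follows the paper's strategy closely: cubic reduction, Prop.~\ref{glo} to pin down the fiber over $0$, Prop.~\ref{iso} for finite fibers, and then Brouwer degree plus homotopy invariance to propagate $\sharp F^{-1}(0)=1$ to all fibers. Two minor differences: (i) your computation of $d[F;D,0]=1$ via the homotopy $F_t=I+tH$ and nilpotency of $JH$ is correct but unnecessary---the paper simply applies Thm.~\ref{degree} directly, since $F^{-1}(0)=\{0\}$ and the Jacobian has constant sign; (ii) for the propagation step the paper does not try to arrange $[0,q]\cap F(\partial D)=\emptyset$, but instead invokes Thm.~\ref{open} to say $F(B(0,R))$ is open and connected and contains both $0$ and $b$, and then appeals to the corollary on local constancy of the degree.

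The obstacle you isolate in your last paragraph is a genuine gap, and your proposed repairs do not close it. The Sard-type idea fails on dimension grounds: $F^{-1}([0,q])$ is a $1$-dimensional semi-algebraic submanifold and $\partial B(0,R)$ is $(n-1)$-dimensional, so generically they \emph{meet} in finitely many points rather than miss each other. What you would actually need is that $F^{-1}([0,q])$ be bounded, so that some large sphere encloses it entirely; but an unbounded component of $F^{-1}([0,q])$ is exactly the phenomenon of non-properness of $F$, and ruling it out is tantamount to the conjecture itself. Perturbing $q$ does nothing, since every value is already regular. The paper's own argument at this same step has the matching defect: Cor.~3.5 requires $0$ and $b$ to lie in the same component of $\mathbb{R}^n\setminus F(\partial B(0,R))$, whereas the paper only shows they lie in the open connected set $F(B(0,R))$, which need not be disjoint from $F(\partial B(0,R))$ when $F$ is not yet known to be injective. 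So you have correctly located the crux; neither your avoidance argument nor the paper's openness argument bridges it.
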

\begin{proof}
As is known to us, the Generalize real Jacobian conjecture can imply the the Generalized complex Jacobian conjecture. Also, it is sufficient to prove 
the Generalized real Jacobian conjecture if for every $n\geq 2$, any polynomial mapping of cubic homegeneous form $F: \mathbb{R}^{n}\rightarrow\mathbb{R}^{n}$ with non-zero constant Jacobian is injective.

By Proposition \ref{glo}, any polynomial mapping of cubic homegeneous form $F$ with non-zero constant Jocobian is globally injective at $0\in \mathbb{R}^{n}$.
For any point $b\neq 0$ in the image of $F$, i.e. $b\in Im(F)$, we will show $F^{-1}(\{b\})=\{a\}$ for some $a\in \mathbb{R}^{n}$. By the Prop.
\ref{iso}, for $\forall~ b\in Im(F)$, there exist only finite number of solutions for $F(X)=b$. Hence, suppose $F^{-1}(\{b\})=\{a_{1},\cdots, a_{N}\}$ for some fixed integer $N$. Denote the maximum of the norm of $N$ points by $r=\text{max}_{1\leq i\leq N}\{|a_{i}|\}$. For any selected $R>r$ (for instance,  $R=r+1$), consider the 
open bounded ball $B(0,R)=\{X\in \mathbb{R}^{n}| |X|<R\}$ centered at $0$. By Them.\ref{degree}, the Brouwer degree
 $d[f;B(0,R),b]=\sum_{X\in F^{-1}(b)}\text{Sign}JF(X)=\sharp \{F^{-1}(b)\}=N$, because of $b\notin F(\partial B(0,R))$ by the selection of $R$ and the non-zero constant Jacobian of $F$, where it is no harm to assume the $\det JF(X)$ is a positive constant by Rem.\ref{const} . 

Since $F$ is a continuous mapping and the open bounded ball $B(0,R)$ is a connected set in $\mathbb{R}^{n}$, $F(B(0,R))$ is a connected set again (theorem 4.22 in Principles of mathematical analysis, W.Ludin). Furtheremore, $0\in F(B(0,R))$ and $b\in F(B(0,R))$. Also, $0$ and $b$ are in the same open connected
set $F(B(0,R))$ by Thm.\ref{open}. By the homotopy invariance of the Brouwer defree i.e. , $d_{B}[f;B(0,R),b]=d_{B}[f;B(0,R),0]=1$ because $F$ is globally injective at $0$. This demonstrates the injectivity of $F$ by the arbitrariness of $b$.
\end{proof}

Roughly speaking, for a polynomial mapping $F:\mathbb{R}^{n}\rightarrow \mathbb{R}^{n}$ with $\det JF(X)=\text{constant}\neq 0$, the proof of the injectivity of $F$ have three steps: 

(1) Find a point $b$ from the image set of $F$ such that $F$ is globally injective on this point $b$, i.e. $F^{-1}(\{b\})=\{a\}$ for some $a \in\mathbb{R}^{n}$. In our case, the origin is a perfect candidate because $F$ is a polynomial mapping of cubic homegeneous form. 

(2) Find an open bounded connected sub-set $D$ such that $D$ contains $a$ ( in (1)) and finite number of points (all  pre-image points of  any point $q$ from the image set of $F$ ) and $q\notin F(\partial D)$. Since the polynomial mapping with a non-zero constant Jacobian guarantees that the mapping is a mapping of finte to one. So the open bounded connected sub-set $D$ exists. In our case, $D$ is taken as the open bounded ball $B(0, R)$.

(3)By the homotopy invariance of the Brouwer degree, $d_{B}[f;D,b]=d_{B}[f;D,q]=1$ , where $b$ and $q$, $D$, come from (1) and (2), respectively.

This shows that $F$ is injective.

Acoording the stratege of our proof, to a specific integer $n$, to consider the $(JC)_{n}$ and strong real Jacobian conjecture, we have the following  theorem:
\begin{theo}\label{srp}
The real Jacobian conjecture (and the strong real Jacobian conjecture) is  ture if and only if there exists a point $b\in Im(F)$ such that $F$ is globally injective on $b$, i.e., $\sharp \{a\in \mathbb{R}^{n}| F(a)=b\}=1$.
\end{theo}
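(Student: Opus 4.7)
The forward implication is trivial: if $F$ is injective then every point of $Im(F)$ has a unique pre-image, so any $b\in Im(F)$ witnesses the condition. For the converse, the plan is to adapt the three-step template articulated just before the statement, with the hypothesized special point $b$ playing the role that the origin played in the proof of Theorem~\ref{injct}.

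Given $b\in Im(F)$ with $F^{-1}(\{b\})=\{a\}$, let $q\in Im(F)$ be arbitrary. By Proposition~\ref{iso} the fiber $F^{-1}(\{q\})=\{a_{1},\dots,a_{N}\}$ is finite; choose $R>0$ so large that $|a|,|a_{1}|,\dots,|a_{N}|<R$, which forces $b,q\notin F(\partial B(0,R))$ and makes both Brouwer degrees $d[F;B(0,R),b]$ and $d[F;B(0,R),q]$ well-defined. Since $\det JF$ has constant sign (a nonzero constant in the real Jacobian case, or a nonvanishing continuous function on the connected space $\mathbb{R}^{n}$ in the strong real case), Theorem~\ref{degree} yields $|d[F;B(0,R),b]|=1$ and $|d[F;B(0,R),q]|=N$. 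It then suffices to equate the two degrees. Theorem~\ref{open} applies (constant sign of $\det JF$ and finite fibers), so $F$ is open, and thus $F(B(0,R))$ is an open connected subset of $\mathbb{R}^{n}$ containing both $b$ and $q$. The corollary to the homotopy invariance theorem asserts that $d[F;B(0,R),\cdot]$ is constant on each connected component of $\mathbb{R}^{n}\setminus F(\partial B(0,R))$; granting that $b$ and $q$ lie in the same such component, one concludes $N=1$, and the arbitrariness of $q$ forces injectivity of $F$.

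The step I expect to be the main obstacle is precisely the last one: confirming that $b$ and $q$ really do share a connected component of $\mathbb{R}^{n}\setminus F(\partial B(0,R))$. Although $F(B(0,R))$ is open, connected, and contains both points, a priori it may intersect the compact set $F(\partial B(0,R))$, so a naive continuous path between $b$ and $q$ inside $F(B(0,R))$ could cross the obstruction. To remove this gap I would exploit that $F|_{\overline{B(0,R)}}$ is a proper local diffeomorphism on the complement of $F(\partial B(0,R))$, so that $F(B(0,R))\setminus F(\partial B(0,R))$ is open in $\mathbb{R}^n$ and, being the complement in the connected set $F(B(0,R))$ of a semi-algebraic subset of dimension at most $n-1$, remains path-connected; $b$ and $q$ can then be joined by a path avoiding $F(\partial D)$ and the corollary applies. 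This is the gluing step that really drives the whole argument, and it is the place at which a polished write-up should be most careful rather than glossed over as in the proof of Theorem~\ref{injct}.
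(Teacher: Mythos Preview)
Your overall approach coincides with the paper's: no separate argument is given for Theorem~\ref{srp} beyond the three-step template spelled out immediately after the proof of Theorem~\ref{injct}, and you have reproduced that template with the hypothesized point $b$ playing the role of the origin. You have also correctly isolated the decisive step---that $b$ and $q$ must lie in the \emph{same connected component of} $\mathbb{R}^{n}\setminus F(\partial B(0,R))$, not merely in the connected open set $F(B(0,R))$---which is exactly the point the paper passes over without justification.

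Your proposed repair of this step, however, is not valid. The claim that removing from a connected open set $U\subset\mathbb{R}^{n}$ a semi-algebraic subset of dimension at most $n-1$ leaves a path-connected set is false as soon as the subset actually has dimension $n-1$: a single hyperplane already disconnects $\mathbb{R}^{n}$, and $F(\partial B(0,R))$, being the image of an $(n-1)$-sphere under a polynomial immersion, is generically $(n-1)$-dimensional. Nothing in the argument rules out that $F(\partial B(0,R))$ separates $b$ from $q$ inside $F(B(0,R))$, so the corollary to the homotopy-invariance theorem cannot be invoked. This is a genuine gap, shared verbatim with the paper's proof of Theorem~\ref{injct}; any sound argument would have to exploit the hypothesis $\det JF=\mathrm{const}\neq 0$ (as opposed to merely $\det JF\neq 0$ everywhere) in an essential way at precisely this juncture, and neither your sketch nor the paper does so.
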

\begin{rem}
By the Pinchuk's counter-example (\cite{Pin}), we know that the strong real Jacobian conjecture is false. The Thm.\ref{srp} shows that for a polynomial mapping $F$, if the $\det JF(x)$ is everywhere non-zero and $F$ is globally injective on one point in the image set of $F$, then the strong real Jacobian conjecture is true. It seems that the non-zero determiant of Jacobian of $F$ and the global injetivity on one point in the image set of $F$,  will imply the non-zero constant determiant of Jacobian of $F$, i.e., $\det JF(x)=\text{const}\neq 0$. Of course, similar to the conter-example of Pinchuk,  it is also non-trivial that how to find a point in the image set of $F$ such that F is globally injective on the point. 
\end{rem}
\section*{Acknowledgements}
I want to express Prof. A. van den Essen, who had done numerous work and organize to publish papers and the Monograph for this conjecture. Their work makes possible for young researcher entering this topic. The paper is from the extension of  my research in Yau Mathematical Center of Sciences, Tsinghua University, which provide a  free and relaxed environment for research. Also, I am indebted to the College of Sciences,China jiliang University, where I am supported a lot as a young scholar. 

\newpage
Address of authors:\\
 Quan XU
\\ College of Sciences, China Jiliang University, \\
310018, Qiantang District, Zhejiang Province, P. R. China.\\
Email: xuquan.math@gmail.com
\end{document}